\documentclass{nmd/nmd-article}
\makeatletter
 \nmd@setarticlefonts{15bp}{22bp}{17bp}{11bp}
\makeatother

\newcommand{\CPbar}{\overline{\CP}\vphantom{\CP}}

\title{Trisection invariants of 4-manifolds are uncomputable}

\author{Nathan M. Dunfield}
\givenname{Nathan}
\surname{Dunfield}
\address{Dept.~of Mathematics, University of Illinois Urbana-Champaign, Urbana, IL 61801, USA
}
\email{nathan@dunfield.info}
\urladdr{http://dunfield.info}

\author{Marc Kegel}
\givenname{Marc}
\surname{Kegel}
\address{Universidad de Sevilla, Dpto.\ de Álgebra,
Avda.\ Reina Mercedes s/n,
41012 Sevilla, Spain}
\email{kegelmarc87@gmail.com}
\urladdr{https://marckegel.github.io/}

\author{Shana Yunsheng Li}
\givenname{Shana}
\surname{Li}
\address{Dept.~of Mathematics, University of Illinois Urbana-Champaign, Urbana, IL 61801, USA}
\email{yl202@illinois.edu}
\urladdr{https://shana-y-li.github.io/}

\author{Qiuyu Ren}
\givenname{Qiuyu}
\surname{Ren}
\address{Department of Mathematics, Stanford University, Stanford, CA 94305, USA}
\email{qren18@stanford.edu}
\urladdr{https://web.stanford.edu/~qren18/}

\begin{document}

\begin{abstract}
We prove that two invariants of smooth $4$-manifolds defined in terms of trisections are uncomputable: the trisection genus and the Kirby--Thompson $L$-invariant. That is, there does not exist an algorithm that takes as input a triangulated closed orientable $4$-manifold and outputs one of these quantities. The result on the $L$-invariant resolves the second part of Problem 4.116 in the K3 problem list. In the same spirit, we show that the PL multisection genus of PL manifolds is uncomputable in dimensions at least four.
\end{abstract}

\maketitle

\section{Introduction}

In~\cite{GayKirby}, Gay and Kirby showed that every closed orientable connected smooth $4$\hyp manifold $X$ admits a
\emph{trisection}: a decomposition into three $4$-dimensional $1$\hyp handlebodies such that their pairwise
intersections are $3$-dimensional handlebodies, and their triple
intersection is a closed surface $\Sigma$. This beautiful structure theorem gives rise to natural
invariants of smooth $4$-manifolds. In particular, the \emph{trisection genus} $g(X)$ is
the minimal genus of $\Sigma$ among all trisections of $X$. More subtly,
Kirby and Thompson~\cite{KirbyThompson2018} introduced a smooth $4$-manifold invariant $L_X$ obtained by minimizing a curve-complex complexity
of a trisection diagram over all trisections of $X$.
In this note, we prove that both these invariants are uncomputable. For the $L$-invariant, we have: 

\begin{theorem}\label{thm:L-inv}
  There does not exist an algorithm that takes as input a PL triangulation of a closed orientable connected $4$-manifold $X$, and outputs the Kirby--Thompson L-invariant of $X$.
\end{theorem}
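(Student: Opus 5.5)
The plan is to reduce from the unsolvability of recognizing $S^4$ among suitable inputs, or more precisely from Markov's theorem that homeomorphism of closed $4$-manifolds is undecidable. We use the known fact, due to Kirby--Thompson and subsequent work, that $L_X = 0$ characterizes a specific class of $4$-manifolds. Namely, $L_X=0$ if and only if $X$ is diffeomorphic to a connected sum of copies of $S^4$, $\CP$, $\CPbar$, $S^1\times S^3$ and $S^2\times S^2$. In particular, $L$-invariant zero together with vanishing intersection form, $\pi_1=1$ and $\chi=2$ recognizes $S^4$.

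The concrete reduction goes as follows. Start from an undecidable family where one cannot tell whether a manifold is standard. Markov's construction, as refined for instance by Gordon or by Stillwell, produces for each finite group presentation $P$ a triangulated smooth $4$-manifold $X_P$ with the following property: if $P$ presents the trivial group, then $X_P$ is diffeomorphic to a fixed standard manifold $\#_k (S^2\times S^2)$ (up to connected sum with a fixed piece). Otherwise $\pi_1(X_P)\neq 1$. Such $X_P$ is built as the boundary of a $5$-dimensional handlebody with $1$- and $2$-handles given by $P$. When $P$ is trivial, handle slides in dimension $5$ cancel the $1$-handles against the $2$-handles, so $X_P\cong \#_k S^2\times S^2$ smoothly. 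Moreover, one can arrange $H_1(X_P)=0$ and a fixed intersection form independent of $P$. This is standard: use balanced presentations with trivial abelianization, so that homology and Euler characteristic are constant.

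Now suppose an algorithm computed $L_X$. Given $P$, triangulate $X_P$ effectively and compute $L_{X_P}$. If $P$ is trivial, then $L=0$. If $L=0$, the classification result says $X_P$ is a connected sum of standard pieces. A connected sum of those pieces is simply connected exactly when no $S^1\times S^3$ summand occurs, and $H_1(X_P)=0$ rules that summand out. Hence $\pi_1(X_P)=1$, and so $P$ is trivial. Thus the algorithm would decide triviality of group presentations within this class, contradicting Adian--Rabin.

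The main obstacle is the input fact that $L_X=0$ forces $X$ to be standard. I would try to establish or cite it by showing that $L=0$ means each pair of cut systems shares curves enough that the diagram is a connected sum of genus-one and standard genus-zero pieces, which then destabilizes. If only a weaker classification is available, I would instead use that $L$ is bounded below by a computable function going to infinity with some complexity of $\pi_1$. Failing that, I would use that $L_X$ determines whether $X$ is standard in some effective way. A secondary technical point is ensuring that the family $X_P$ has constant homology, so that $L$ alone can detect the distinction.
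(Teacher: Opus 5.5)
Your reduction is correct in substance but organized differently from the paper's. The paper uses Gordon's theorem (Theorem~\ref{thm:Gordon}) as a black box. By Kirby--Thompson's Theorem~12, $L_X=0$ puts $X$ in the class of standard connected sums, and inside that class the diffeomorphism type is determined by homology and the intersection form, both computable. So an $L$-algorithm would recognize $\#_{12}(S^2\times S^2)$.

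You instead open up the Markov--Gordon family and use its dichotomy: each member is either $\#_k(S^2\times S^2)$ or has $\pi_1\neq 1$. Then $L_{X_P}=0$ together with $H_1$ already singles out the standard members. This avoids classifying or computing intersection forms. The price is the structural fact that non-standard members have nontrivial $\pi_1$, which is not part of the statement of Gordon's theorem. It does hold for his family: the paper records it as property~(3) in the proof of Theorem~\ref{thm:genus} and runs exactly your kind of dichotomy argument there for $n=4$. The paper's route is more modular, since any unrecognizable manifold in the $L=0$ class would do; yours is more elementary at the last step. Your hypothesis $H_1(X_P)=0$ is unnecessary: once $L=0$, $\pi_1$ is free of rank $b_1$, which is computable.

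Several supporting claims need correcting.
\begin{itemize}
\item Smooth recognition of $S^4$ is open, so you cannot reduce from it. Your actual argument does not use it.
\item The dichotomy does not hold ``for each finite presentation.'' In dimension $5$, handle slides and $1$--$2$ cancellations realize only Andrews--Curtis-type moves. Adding a consequence relator stabilizes $X_P$ by an $S^2$-bundle over $S^2$. So for an arbitrary presentation of the trivial group you cannot conclude $X_P\cong\#_k(S^2\times S^2)$.
\item Balanced presentations with trivial abelianization give $\chi(X_P)=2$, i.e.\ homology $4$-spheres. In the trivial case these are homotopy $4$-spheres, which puts you back at the open $S^4$ question.
\item What you need is Gordon's specific family ($\chi=26$, $k=12$), with its undecidability and dichotomy taken from \cite{Gordon2022}. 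The contradiction is then with that family's undecidability property, not with the bare Adian--Rabin theorem.
\item The hedge ``up to connected sum with a fixed piece'' is harmless only if that piece itself has $L=0$.
\item Your fallback strategies are unnecessary, since Kirby--Thompson's Theorem~12 gives exactly the characterization of $L_X=0$ you need.
\end{itemize}
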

This resolves the second part of Problem 4.116 in the K3 problem list \cite{K3}. 
The first part of that problem remains open, which asks whether a certain distance in the cut complex
associated to a given trisection is computable; here, $L_X$ is the minimum of this distance over all trisections of $X$. 

An analogous notion of \emph{multisections} of higher-dimensional manifolds was introduced
in~\cite{aribi2023multisections}. Recently, it was proven in~\cite{CMRZ} that every closed
orientable connected smooth $n$-manifold admits a multisection, and their
construction also applies in the PL category; see~\cite[Section~5.4]{CMRZ}.
As before, this gives rise to the
\emph{PL $(n-1)$-section genus} $g^{PL}_n(X)$ of a PL $n$-manifold $X$. For $n=4$, this agrees with the trisection genus of the corresponding smooth
$4$-manifold.\footnote{Here we are using that every PL $4$-manifold admits a smooth structure which is unique up to diffeomorphism; see for example~\cite[Theorem 2]{Milnor_overview}.}
Our next result shows that these invariants
are not computable in dimensions $n\geq4$. We formulate our result in the PL category, where finite PL
triangulations provide a natural finite input model. 

\begin{theorem}\label{thm:genus}
For any $n\ge4$, there does not exist an algorithm that takes as
input a PL triangulation of a closed orientable connected PL $n$-manifold $X$ and outputs the PL $(n-1)$-section genus of $X$.
\end{theorem}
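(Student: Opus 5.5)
Our plan is to reduce a known undecidable recognition problem to computing $g^{PL}_n$. The classical input is the theorem of Markov (with Novikov's refinements): for $n\ge 4$, no algorithm decides whether a given triangulated PL $n$-manifold is PL homeomorphic to a fixed manifold $M_0$. It suffices to find, in each dimension $n\ge4$, an explicit $M_0$ with two properties. First, $M_0$ attains some small genus value $g_0$. Second, $M_0$ is the only manifold attaining $g_0$ within the families of manifolds that appear in the undecidability construction. If an algorithm computed $g^{PL}_n$, it would then decide homeomorphism to $M_0$ on those families, which is a contradiction.

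The natural candidate is the genus-$0$ case. In the trisection setting, genus $0$ characterizes $S^4$: the central surface is a sphere, all handlebodies are balls, and so $X$ is a union of balls glued along balls. We therefore plan to prove, for all $n\ge4$, that $g^{PL}_n(X)=0$ if and only if $X\cong_{PL} S^n$.

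\emph{Forward direction.} We analyze a genus-$0$ multisection directly from the definition in~\cite{aribi2023multisections}. The $(n-1)$ pieces are $n$-dimensional $1$-handlebodies, and their intersections are lower-dimensional handlebodies. When the central surface is $S^2$, an inductive argument over the intersection strata should show that every piece has no $1$-handles, so every piece is a PL $n$-ball. A manifold built from $n-1$ balls glued in this controlled way, for example with $X$ a union of a ball and a regular neighborhood of a spine of lower dimension, is a PL sphere. We expect to reach this by exhibiting $X$ as a union of two balls along their common boundary, or by invoking the PL Poincaré/Alexander trick. \emph{Reverse direction.} We check that the standard genus-$0$ multisection of $S^n$ exists.

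This yields only a criterion for recognizing $S^n$, and sphere recognition is undecidable only for $n\ge5$ (Novikov). For $n=4$ the question is open, which is the main obstacle. To handle it we would instead use Markov's construction, which produces $4$-manifolds $X_w=M_w\#\,k(S^2\times S^2)$ from group presentations and for which recognizing a fixed $M_0$ is undecidable. The task is then to compute the genus on this family: we aim to show that it takes a value $g_0$ for the trivial group and a strictly larger value otherwise. A lower bound of the form $g\ge b_2$ together with rank of $\pi_1$, which holds since $\chi$ and $\pi_1$ give $g\ge \operatorname{rk}\pi_1+b_2$, combined with Markov's setup where the group is trivial exactly when the manifold is standard, would yield this. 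For $n\ge5$ we can similarly use Markov-type families as a backup if the genus-$0$ characterization proves awkward. Bounding the genus from below by $\pi_1$-rank and homology, and from above by an explicit multisection of the standard manifold, is the crux.
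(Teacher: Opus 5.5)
Your plan matches the paper's proof. For $n\ge5$ you show that genus $0$ characterizes $S^n$ and invoke Novikov's unrecognizability of $S^n$; for $n=4$ you use a Markov--Gordon family in which non-standard members have nontrivial $\pi_1$, and compare $g(\#_k(S^2\times S^2))=2k$ with a Chu--Tillmann-type lower bound. Your weaker bound $g\ge b_2+\operatorname{rk}\pi_1$ is enough (it gives $g\ge 25>24$ on Gordon's family, where the paper gets $27$), but only once you use that all members share the same Euler characteristic, so that $b_2=\chi-2+2b_1$ never drops below $b_2$ of the standard manifold.
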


\begin{remark}\label{rem:input}
An analogous uncomputability statement can be formulated for the smooth multisection genus once an effective finite presentation of smooth manifolds is fixed. We do not state Theorem~\ref{thm:genus} in the smooth category since there seems to exist no universally accepted finite presentation model for smooth manifolds in high dimensions.
\end{remark}

\subsection{Acknowledgments}
This project began and was carried out primarily during the 2026 Trisectors Workshop at Western Washington University. We thank the organizers of the workshop for fostering a stimulating and productive environment. 
Dunfield and Li were partially supported by the US National Science Foundation grant DMS-2303572, and Dunfield was partially supported by the Simons Foundation award MP-TSM-00002564.
Kegel is supported by a Ram\'on y Cajal grant \mbox{(RYC2023-043251-I)} and PID2024-157173\-NB-I00 funded by MCIN/AEI/10.13039/501100011033, by ESF+, and by FEDER, EU; and by a VII Plan Propio de Investigación y Transferencia (SOL2025-36103) of the University of Sevilla.
This research was conducted during the period when Ren served as a Clay Research Fellow.

\section{Proofs}
In this section, we prove Theorems~\ref{thm:L-inv} and~\ref{thm:genus} using classical
unrecognizability results for manifolds. The starting point is a
result of Markov asserting that there exists an integer $\ell$ such
that $\#_\ell(S^2\times S^2)$ is unrecognizable~\cite{Markov1958}. Here, we say that a smooth compact $4$-manifold $X$ is \emph{unrecognizable} if there exists no algorithm that takes as input a triangulation of a smooth compact $4$-manifold $X'$ and outputs whether or not $X'$ is diffeomorphic to $X$. Currently, the best result of this form is that of Gordon:

\begin{theorem}[\cite{Gordon2022}]\label{thm:Gordon}
The manifold $\#_{12}(S^2\times S^2)$ is smoothly unrecognizable.
\end{theorem}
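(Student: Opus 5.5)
The plan is to follow Markov's strategy and sharpen the numerology. The idea is to reduce a triviality problem for finitely presented groups, restricted to presentations of a fixed shape, to diffeomorphism recognition of $\#_{12}(S^2\times S^2)$.

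\textbf{Step 1: the group-theoretic input.} I would fix a recursive family of finite presentations
\[
  P_w=\langle x_1,\dots,x_k \mid r_1,\dots,r_{k+12}\rangle ,
\]
indexed by words $w$, all with the same number $k$ of generators and exactly $k+12$ relators. The family must have two properties: $P_w$ presents the trivial group if and only if $w$ is trivial in some fixed group with unsolvable word problem, and there is no algorithm deciding this. I would start from a finitely presented group with unsolvable word problem and few relators (Borisov/Collins-type examples). I would then run an Adian--Rabin style construction, carried out economically so that the relator-minus-generator excess is $12$. Padding with trivial relators can only increase the excess, so the only requirement is that the excess be at most $12$.

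\textbf{Step 2: the manifold construction.} For each $w$, I would build a $5$-dimensional handlebody $W_w$ with one $0$-handle, $k$ $1$-handles, and one $2$-handle attached along each relator $r_j$. I would choose the framings so that the $2$-handles are trivially framed in the sense that gives $S^2\times D^3$ (rather than the twisted bundle) summands. Set $X_w=\partial W_w$. This is a closed smooth $4$-manifold with $\pi_1(X_w)\cong\pi_1(W_w)=G(P_w)$, since in dimension $5$ the boundary inclusion is $2$-connected for a $2$-handlebody. Triangulations of $X_w$ are computable from $P_w$.

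\textbf{Step 3: the dichotomy.} If $G(P_w)\ne 1$, then $\pi_1(X_w)\neq 1$, so $X_w\not\cong\#_{12}(S^2\times S^2)$.

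If $G(P_w)=1$, I claim $W_w\cong\natural_{12}(S^2\times D^3)$, and hence $X_w\cong\#_{12}(S^2\times S^2)$. This is the crux. In dimension $5$, attaching circles of $2$-handles in $\partial(\natural_k S^1\times D^4)$ are isotopic as soon as they are homotopic, because circles in a $4$-manifold generically embed. So every Tietze-type manipulation, including conjugating and multiplying relators, is realized by handle slides and isotopies.

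Triviality of the group gives a chain of such moves until $k$ of the relators become the generators themselves. At that point the corresponding $2$-handles cancel the $1$-handles. The remaining $12$ $2$-handles are attached along null-homotopic, hence unknotted, circles in $\partial D^5$, which yields $\natural_{12}(S^2\times D^3)$.

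I expect this step to be the main obstacle. One difficulty is that Andrews--Curtis moves are not obviously sufficient: one needs that the extra $12$ relators allow any Tietze sequence to be simulated, which is where stabilization by trivial relators is used. The other difficulty is bookkeeping the framings, specifically the $w_2$ / spin condition distinguishing $S^2\times S^2$ from $S^2\tilde\times S^2$ summands. I would handle the framings by noting that $\pi_1(SO(3))=\mathbb Z/2$, so each $2$-handle framing can be chosen freely at the start and is preserved by slides up to a computable parity. Then I would choose all of them to be even.

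\textbf{Step 4: the reduction.} Given an algorithm recognizing $\#_{12}(S^2\times S^2)$, apply it to the triangulation of $X_w$. By Step 3, its answer decides whether $G(P_w)$ is trivial, contradicting Step 1. Hence $\#_{12}(S^2\times S^2)$ is smoothly unrecognizable.
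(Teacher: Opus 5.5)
Your architecture (an Adian--Rabin family, the 5-dimensional $2$-handlebodies $W_w$, their boundaries, and $\pi_1$ to detect the nontrivial case) is the Markov--Gordon route that the paper cites. Steps 2 and 4 and the first half of Step 3 are fine. The gap is the second half of Step 3, which carries the whole content of the theorem.

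The moves available to you in dimension 5 are:
\begin{itemize}
\item isotopy of attaching circles, which realizes conjugation;
\item slides, which multiply a relator by a conjugate of a \emph{different} relator;
\item creating or cancelling $1$/$2$-pairs.
\end{itemize}
These realize Andrews--Curtis moves and nothing more. Triviality of $G(P_w)$ only puts each $x_i$ in the normal closure of \emph{all} the relators. An expression $x_i=\prod c_l r_{j_l}^{\pm1}c_l^{-1}$ typically uses every relator, including the one you would be sliding, so it does not let you slide any relator onto $x_i$. Thus your sentence ``triviality of the group gives a chain of such moves until $k$ of the relators become the generators'' is an Andrews--Curtis-type assertion. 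It is not known in general, and nothing in your Step 1 guarantees it. A test case is a balanced presentation of the trivial group on many generators, padded with $12$ trivial relators. Stabilizing by further trivial relators is not free either, since each one adds an $S^2\times D^3$ summand.

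The generic way out is Markov's. A spare trivial relator can be slid onto $x_1$ and then cancels a $1$-handle, but it is consumed in the process. So you need one spare trivial relator per generator, and the outcome is $\#_m(S^2\times S^2)$ with $m$ the number of relators of the \emph{unpadded} presentation. Your remark that ``the only requirement is that the excess be at most $12$'' therefore tracks the wrong quantity. Generic padding reaches $12$ only if the unpadded presentations have at most $12$ relators, which your Step 1 does not provide.

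To get $12$ you need structural control over the family. One sufficient form is
\[
\langle a,y\mid R_1,\dots,R_{12},S_1(w),\dots,S_n(w)\rangle,
\]
where the $R_i$ are the relators of the base group and $n$ equals the total number of generators. The family should be designed so that when $w=1$ in the base group, sliding over the $R_i$ deletes $w$, and the resulting $S_j$ form an explicitly AC-trivial balanced system. Then all $1$-handles cancel, and only the $12$ handles of the $R_i$ survive, attached along null-homotopic circles. Property (3) of Gordon's family, as described in the paper, is exactly your Step 3 conclusion. It has to come from the form of his presentations, not from triviality of the group alone.

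For the framings, rather than tracking parities, fix a spin structure on $\natural_k S^1\times D^4$ and choose each $2$-handle framing so that the spin structure extends over it. Then $W_w$ is spin, which rules out $S^2\tilde{\times}D^3$ summands at the end.
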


Markov~\cite{Markov1958} and Gordon~\cite{Gordon2022} state their undecidability results only up to homeomorphism. However, it is straightforward to see that the proofs work in the smooth and PL categories as well; see, for example,~\cite{overview}. On the other hand, the recent improvement of Tancer~\cite{Tancer2023} applies only in the topological category.

We will prove Theorems~\ref{thm:L-inv} and~\ref{thm:genus} by showing that a hypothetical algorithm for computing the $L$-invariant or the PL $(n-1)$-section genus would contradict Theorem~\ref{thm:Gordon}, a slight refinement of it, or its higher-dimensional analogs.

\begin{proof}[Proof of Theorem~\ref{thm:L-inv}]
By \cite[Theorem~12]{KirbyThompson2018}, $L_X = 0$ if and only if $X$ is diffeomorphic to a connected sum of copies of $S^4$, $\CP^2$, $\CPbar^2$, $S^1\times S^3$, and $S^2 \times S^2$.
Thus, if $L_X > 0$, then $X$ is not
diffeomorphic to $\#_{12}(S^2 \times S^2)$. On the other hand, if $L_X=0$, then $X$ belongs to the above class of connected sums, and within this class its diffeomorphism type can be determined from its homology groups and intersection form. 
Since both are algorithmically possible (for the latter, see for example~\cite[Section 6]{4_manifold_algo}), it follows that an algorithm to compute the $L$-invariant would yield an algorithm recognizing $\#_{12} (S^2\times S^2)$, contradicting Theorem~\ref{thm:Gordon}.
\end{proof}

\begin{proof}[Proof of Theorem~\ref{thm:genus}]
It is straightforward to see that the only PL $n$-manifold with $(n-1)$-section genus $0$ is the PL $n$-sphere $S^n$. If $n\geq5$, the PL $n$-sphere is unrecognizable by Novikov; see \cite{Volodin1974,Chernavsky2006}. Thus, it is undecidable whether the PL $(n-1)$-section genus is equal to $0$ for $n\ge5$.

For $n=4$, we use Gordon's construction in \cite{Gordon2022}. From an Adian--Rabin family of finite group presentations, Gordon constructs a family of closed smooth $4$-manifolds $\{W_p\}$ with the following properties:
\begin{enumerate}
    \item there is no algorithm deciding which $W_p$'s are diffeomorphic to
$\#_{12}(S^2\times S^2)$;
    \item every $W_p$ has Euler characteristic $26$; and
    \item if $W_p$ is not diffeomorphic to $\#_{12}(S^2\times S^2)$, then $\pi_1(W_p)$ is nontrivial.
\end{enumerate}
Now $g(\#_{12}(S^2\times S^2))=24$ as $g$ is subadditive under connected sums and is bounded below by $b_2$. On the other hand, if
$\pi_1(W_p)$ is nontrivial, then the lower bound of Chu and
Tillmann~\cite{ChuTillmann2019} gives
$$g(W_p)\geq \chi(W_p)-2+3\operatorname{rk}(\pi_1(W_p)) \geq 26-2+3=27.$$
Consequently, an algorithm computing the trisection genus would determine which members of Gordon's family are diffeomorphic to $\#_{12}(S^2\times S^2)$, contradicting the defining undecidability property of that family.
\end{proof}

\begin{remark}
    More precisely, the proofs above show that there is no algorithm in dimension $4$ for deciding if $L=0$, and likewise if $g=24$.  Similarly, there is no algorithm for deciding if $g_n^{PL}=0$, for $n\geq5$.

    More generally, it is undecidable whether $g=k$ for every $k\geq24$. 
    Indeed, let $\{W_p\}$ be Gordon's family of manifolds from the above proof. By considering the trisection genus of $W_p \, \# \, (\#_{k-24}\CP^2)$, one can show that an algorithm for deciding $g=k$ would yield an algorithm to decide which $W_p$'s are $\#_{12}(S^2\times S^2)$, a contradiction.

    One can similarly show that $g_n^{PL}=k$ is undecidable for all $n\geq5$ and $k\geq0$ by considering $g_n^{PL}\big(W_p'\,\#\,(\#_k(S^1\times S^{n-1}))\big)$ and noting that $g_n^{PL}(X)\ge \operatorname{rk}(\pi_1(X))$; here $W_p'$ is a family of manifolds among which $S^n$ is not recognizable, and with $\pi_1(W_p')\ne1$ if $W_p'$ is not $S^n$, cf.~\cite{Chernavsky2006}.
\end{remark}

\begin{remark}
Gordon announced in~\cite[Theorem 3]{GordonUnknottingSurfaces} that there exist smoothly
unknotted non-orientable surfaces in $S^4$ that are algorithmically unrecognizable.  
In~\cite{BCTT}, an $\mathcal L$-invariant for knotted surfaces is introduced, and it is shown that 
if a closed surface $F$ in $S^4$ satisfies $\mathcal L(F)=0$, then $F$ is
a distant sum of unknotted $2$-spheres and unknotted non-orientable
surfaces. We note that among such surfaces, homology can detect an unknotted non-orientable surface.
Consequently, an algorithm computing the $\mathcal L$-invariant for knotted surfaces would contradict Gordon's result.
\end{remark}

{\RaggedRight 
\small
\bibliographystyle{nmd/math} 
\bibliography{report.bib}
}
\end{document}